\newcommand{\half}{\frac{1}{2} }
\newcommand{\leftb}{\left( }
\newcommand{\rightb}{\right) }
\renewcommand{\cal}[1]{\mathcal{#1}}
\newcommand{\real}{\mathbf{R}}
\theoremstyle{definition}
\newtheorem{theorem}{Theorem}
\newtheorem{corollary}[theorem]{Corollary}
\newtheorem{lemma}{Lemma}
\newtheorem{remark}{Remark}
\newtheorem{definition}{Definition}
\begin{document}

\setlength{\baselineskip}{20pt}

\title{An analytic example of latent information prior}

\author{F. TANAKA}


\maketitle

\begin{abstract}
Recently Komaki proposed latent information priors as an objective prior.
In this short article, we consider the one-step ahead prediction based on one-sample under the binomial model.
In this specific case, the latent information prior is derived analytically and shown to be a discrete prior.
It verifies the numerical result by Komaki.
As a by-product, we obtain the minimal complete class, the minimax predictive distribution.
\end{abstract}


\section{Introduction}

Recently \citet{Komaki2011} proposed latent information priors as the extension of reference priors by~\citet{Bernardo1979}, 
In finite sample, finding these priors are extremely hard unless we use numerical methods.
In this short article, we consider the most simple case and give the explicit form of the latent information prior analytically.
The process to derive it would help us understand properties of latent information priors.

Suppose that the observation $x$ comes from the Bernoulli model $p_{\theta}(x) = \theta^{x} (1-\theta )^{1-x}$, $x=0,1$ and  $0 \leq \theta \leq 1$.
The future outcome $y$ is also generated under the same model.
We consider constructing the one-step ahead predictive distribution based on the observation.
In this specific case, any predictive distribution is determined by an estimate of $\theta $, which is denoted by $\delta (x)$.
The explicit form is $p_{\delta}(y) =  (\delta (x))^{y} (1-\delta (x))^{1-y}$.
From now on, we argue decision-theoretic concepts. Thus, we often call $\delta $ or $p_{\delta}$ a decision instead of an estimate or a predictive distribution.
For basic facts and terminologies of decision theory, readers may consult e.g., \citet{Ferguson1967}.

We adopt the Kullback-Leibler divergence from
 the true probability distribution $p_{\theta}$ to the predictive distribution $p_{\delta}$ as a loss function.
It is simply written by
\begin{align*}
L(\theta, \delta (x)) =  D(p_{\theta} || p_{\delta})= \theta \log \frac{\theta}{ \delta (x)}
 + (1-\theta) \log \frac{1-\theta }{ 1-\delta (x)}.
\end{align*}
According to usual manner, we define $0 \log 0 = 0$ and $c \log 0 = -\infty, (c > 0)$. 
Then, we obtain the risk function by taking the average of the loss function with respect to $x$, 
\begin{align*}
R_{\delta}(\theta ) &= -S(\theta ) + \theta^{2} \log \frac{1}{ \delta (1)}
  + \theta (1 -\theta ) \log  \frac{1}{ 1-\delta (1)}\\
& \qquad  {}+ \theta (1 -\theta ) \log  \frac{1}{ \delta (0)}
    + (1 -\theta )^{2} \log  \frac{1}{ 1-\delta (0)},   
\end{align*}
where $S (\theta ) = -\theta \log \theta - (1-\theta ) \log (1-\theta )$ is the binary entropy.
Compared to the same problem with the squared loss, there is no equalizer because of the nonlinearity of $S(\theta )$.
Thus, even in this very simple problem, it seems difficult to find the minimax decision without any resort
 to numerical methods.
However, exceptionally we are able to find the minimax decision and the latent information prior.

\section{Result}

\subsection{Minimal complete class}

For simplicity, we consider only nonrandomized decision functions.
Each nonrandomized decision function is determined by $\delta(0)$ and $\delta (1)$ and identified with a point in the square $[0,1] \times [0,1]$.
From now on, we write $\delta_{0}, \delta_{1} $ rather than $\delta(0), \delta (1)$.
Let us denote the whole nonrandomized decisions as $\cal{D} = \{  (\delta_{0}, \delta_{1})\in \real^{2}:\ 0 \leq \delta_{0} \leq 1,\ 0 \leq \delta_{1} \leq 1 \}.$
It would be helpful to decompose $\cal{D}$ into some disjoint classes: 
\begin{align*}
 & \cal{C}_{<} = \{ (\delta_{0}, \delta_{1})\in \real^{2}: \ 0 < \delta_{0} < \delta_{1} < 1 \}, 
\quad \cal{C}_{=} = \{ (\delta_{0}, \delta_{1})\in \real^{2}: \ 0 < \delta_{0} = \delta_{1} < 1 \}, \\
 & \cal{C}_{>} = \{ (\delta_{0}, \delta_{1})\in \real^{2}: \ 0 < \delta_{1} < \delta_{0} < 1 \}, 
\quad  \partial \cal{D} =  \{ (\delta_{0}, \delta_{1})\in \real^{2}: \ \delta_{0}=0,1 \text{ or } \delta_{1}=0,1 \}.
\end{align*}
For later convenience, we also define the decision based on the maximum likelihood, $\delta_{MLE}$ and
 related class of decisions.  
$ \cal{M} = \{ \delta_{MLE} \} = \{ (\delta_{0}, \delta_{1}) =(0, 1) \}, \quad \cal{P} = \partial \cal{D} \setminus \cal{M}.$
Clearly we see that $\delta_{MLE}$ is Bayes with respect to a two-point distribution
$\pi_{M} (\{ 0 \} ) = \xi$, $\pi_{M} (\{ 1 \} ) = 1-\xi$, $0 < \xi < 1$.
According to Hartigan's terminology, $\pi_{M}$ is the maximum likelihood prior (See, \citet{Hartigan1998}).


It is easy to see that the whole class of Bayesian predictive distributions is $\cal{C}_{<} \cup \cal{C}_{=} \cup \cal{M}$.
According to \citet{Komaki2011}, it is essentially complete class.
However, this fact is not enough to find the minimax decision.
We show a slightly strong result of completeness in our specific example. 
\begin{theorem}
\label{minicomp}
The class $\cal{A} = \cal{C}_{<} \cup \cal{C}_{=} \cup \cal{M}$ is the minimal complete class (and therefore it constitutes of all admissible decisions). 
\end{theorem}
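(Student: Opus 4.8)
The plan is to deduce the theorem from two separate facts and a standard piece of decision theory: (i) that $\cal{A}$ is a complete class, and (ii) that every decision in $\cal{A}$ is admissible. Granting these, completeness forces every admissible rule into $\cal{A}$, while (ii) gives the reverse inclusion, so $\cal{A}$ is exactly the set of admissible decisions; and since a complete class consisting solely of admissible rules can contain no proper complete subclass (an omitted admissible rule could not be dominated), $\cal{A}$ is the minimal complete class. So the work is entirely in (i) and (ii).

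For the completeness in (i) I would not dominate the offending rules by hand but instead upgrade the essential completeness of $\cal{A}$ recorded by \citet{Komaki2011} using one observation: the risk function already determines the decision. Evaluating the risk at the two extreme parameter values gives $R_{\delta}(0)=-\log(1-\delta_{0})$ and $R_{\delta}(1)=-\log\delta_{1}$, and each of these is a strictly monotone, hence injective, function of $\delta_{0}$ (resp. $\delta_{1}$) on $[0,1]$, with the value $+\infty$ permitted. Thus $\delta\mapsto R_{\delta}$ is injective. Now take any $\delta\notin\cal{A}$; essential completeness supplies $\delta'\in\cal{A}$ with $R_{\delta'}\le R_{\delta}$ everywhere. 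Because $\delta'\ne\delta$, injectivity makes the two risks differ at $\theta=0$ or at $\theta=1$, and at that point the inequality is necessarily strict. Hence $\delta'$ strictly dominates $\delta$, so $\cal{A}$ is complete.

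For the admissibility in (ii) I would show that each member of $\cal{A}$ is a \emph{unique} Bayes rule. The Bayes predictive distribution against a prior $\pi$ is the posterior mean $\delta_{x}=E_{\pi}[\theta\mid x]$, and the Bayes risk splits additively into a $\delta_{1}$-part $(\int\theta^{2}d\pi)(-\log\delta_{1})+(\int\theta(1-\theta)d\pi)(-\log(1-\delta_{1}))$ and an analogous $\delta_{0}$-part. Each part is strictly convex with a unique minimizer as soon as its two coefficients do not both vanish, i.e. whenever $\int\theta\,d\pi>0$ and $\int(1-\theta)\,d\pi>0$, equivalently whenever $\pi$ is not a point mass at $0$ or at $1$. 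Every prior generating $\cal{A}$ meets this: interior point masses produce $\cal{C}_{=}$, genuinely spread priors produce $\cal{C}_{<}$, and $\pi_{M}$ with $0<\xi<1$ produces $\delta_{MLE}$. A unique Bayes rule with finite Bayes risk is admissible, since any dominating rule would attain the same minimal Bayes risk and so coincide with it; thus every decision in $\cal{A}$ is admissible.

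The delicate points are two boundary-type cases. First, the injectivity step must be run with the conventions $0\log 0=0$ and $c\log 0=-\infty$ so that the endpoint risks stay well defined (and infinite) on $\partial\cal{D}$; this is precisely what lets a single argument absorb all of $\cal{C}_{>}$ and $\cal{P}$ at once. Second, $\delta_{MLE}$ is the genuinely tight case for admissibility: its risk is $+\infty$ on $(0,1)$, yet the strict convexity above still yields a unique Bayes rule for $\pi_{M}$ exactly because $0<\xi<1$, so nothing can match its zero risk at both $\theta=0$ and $\theta=1$ without equalling it. Combining (i) and (ii) then gives that $\cal{A}$ is the minimal complete class and coincides with the class of all admissible decisions.
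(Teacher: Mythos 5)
Your proof is correct, but it takes a genuinely different route from the paper's on the completeness half. The paper does not upgrade the essential completeness of \citet{Komaki2011}: it proves completeness directly and constructively, dominating each $\delta \in \cal{C}_{>}$ by the explicit rule $\delta'_{0} = e^{-\mu}\delta_{0}$, $\delta'_{1} = e^{-\mu}\delta_{1} + (1 - e^{-\mu})$ and verifying that the risk difference $F\theta^{2} - G\theta + H$ is everywhere positive, which hinges on the key inequality of Lemma~\ref{keyineq} (whose ``tedious'' proof the paper omits); rules in $\cal{P}$ are disposed of by noting that $\delta_{MLE}$ trivially dominates them. Your injectivity observation --- that $R_{\delta}(0) = -\log(1-\delta_{0})$ and $R_{\delta}(1) = -\log \delta_{1}$ pin down $\delta$, so any ``as good as'' rule supplied by essential completeness that differs from $\delta$ must beat it strictly at an endpoint --- is a clean and valid way to convert essential completeness into completeness, and it eliminates the key lemma entirely. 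What you give up is self-containedness and constructiveness: your argument is only as strong as the two imported facts (Komaki's essential completeness theorem, and the identification of the full class of Bayesian predictive distributions with $\cal{A}$, which the paper asserts as easy but never actually needs for its own proof of completeness), whereas the paper's domination is explicit, exhibiting the better rule in $\cal{C}_{<}$, which also dovetails with the later restriction of the minimax search to $\cal{C}_{<} \cup \cal{C}_{=}$. On the admissibility half the two arguments are essentially parallel (unique Bayes rule with finite Bayes risk implies admissible); your uniform treatment via the additive Bayes-risk split, including the boundary minimizers $\delta_{1}=1$, $\delta_{0}=0$ under $\pi_{M}$, is tidier than the paper's ``trivial'' dispatch of $\cal{C}_{=}$ and $\delta_{MLE}$, though for $\cal{C}_{<}$ you should say explicitly why \emph{every} point of it arises as a Bayes rule: the paper's Beta correspondence $\delta_{0} = a/(a+b+1)$, $\delta_{1} = (a+1)/(a+b+1)$, solvable for $(a,b)$ with $a, b > 0$ precisely when $0 < \delta_{0} < \delta_{1} < 1$, supplies the surjectivity hidden in your phrase ``genuinely spread priors produce $\cal{C}_{<}$.''
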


Key lemma is as follows:
\begin{lemma}
\label{keyineq}
When $0 < \delta_{1} < \delta_{0} < 1$, there exists $\mu > 0$ satisfying
the following inequality:
\begin{align}
\left\{ \log \frac{ 1- \delta_{1}}{   (\delta_{0} + 1 -\delta_{1}) (1 -\delta_{0}) }   \right\}
  \left\{ \log \frac{ \delta_{0}}{   (\delta_{0} + 1 -\delta_{1})  \delta_{1} }   \right\}
  > \mu^{2} > \{  \log ( \delta_{0} +1 -\delta_{1}) \}^{2}  \label{ineq:geods}
\end{align}
\end{lemma}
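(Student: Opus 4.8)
The plan is to reduce the existence of $\mu$ to one scalar inequality and then to settle that inequality by a symmetry-plus-concavity argument. Write the two bracketed factors as $A$ and $B$ and set $C = \log(\delta_0 + 1 - \delta_1)$; note $C > 0$ since $\delta_0 + 1 - \delta_1 > 1$. A number $\mu > 0$ with $AB > \mu^2 > C^2$ exists \emph{iff} $AB > C^2$, so the entire content is the strict inequality $AB > C^2$. First I would record the splitting $A = a - C$ and $B = b - C$, where $a = \log\frac{1-\delta_1}{1-\delta_0} > 0$ and $b = \log\frac{\delta_0}{\delta_1} > 0$ (both positive because $\delta_1 < \delta_0$). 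Expanding $(a-C)(b-C) > C^2$ and cancelling $C^2$ turns the goal into $ab > C(a+b)$, i.e.\ (dividing by $abC > 0$) into
\[
\frac{1}{a} + \frac{1}{b} < \frac{1}{C}.
\]

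Next I would exploit a hidden symmetry. Put $r = 1 - \delta_0$, $t = \delta_1$, and $\eta = \delta_0 - \delta_1 > 0$; then $r,t,\eta > 0$ and $r + t + \eta = 1$. A direct check rewrites all three logarithms through the single function $\Phi(z) := \log(1 + \eta/z)$: namely $a = \Phi(r)$, $b = \Phi(t)$, and $C = \Phi(1)$. Hence, with $\Psi := 1/\Phi$, the target becomes the clean statement
\[
\Psi(r) + \Psi(t) < \Psi(1), \qquad r + t + \eta = 1.
\]

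The decisive step, and the one I expect to be hardest, is to prove that $\Psi(z) = 1/\log(1+\eta/z)$ is concave on $(0,\infty)$. Differentiating $\Psi = 1/g$ with $g(z) = \log(1+\eta/z)$, concavity is equivalent to $2(g')^2 \le g\,g''$, which after simplification and the scaling $z = \eta u$ reduces to $(2u+1)\log\frac{u+1}{u} > 2$ for all $u > 0$; this is precisely the strict logarithmic-mean/arithmetic-mean inequality $L(u,u+1) < A(u,u+1)$. Granting concavity, Jensen gives $\Psi(r) + \Psi(t) \le 2\Psi\!\left(\frac{r+t}{2}\right) = 2\Psi\!\left(\frac{1-\eta}{2}\right)$, and a short computation yields $2\Psi\!\left(\frac{1-\eta}{2}\right) = 2/\log\frac{1+\eta}{1-\eta}$ while $\Psi(1) = 1/\log(1+\eta)$. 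The residual inequality $2\log(1+\eta) < \log\frac{1+\eta}{1-\eta}$ then collapses to $(1+\eta)(1-\eta) < 1$, i.e.\ $\eta^2 > 0$, which holds because $\delta_0 > \delta_1$. Chaining $\Psi(r) + \Psi(t) \le 2\Psi\!\left(\frac{1-\eta}{2}\right) < \Psi(1)$ completes the argument, and any $\mu \in (C, \sqrt{AB})$ then satisfies the lemma.
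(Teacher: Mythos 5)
Your proposal is correct, and after a shared opening reduction it takes a genuinely different route from the paper's. Both arguments start the same way: writing $A = a - C$, $B = b - C$ with $a = \log\frac{1-\delta_1}{1-\delta_0} > 0$, $b = \log\frac{\delta_0}{\delta_1} > 0$, $C = \log(\delta_0 + 1 - \delta_1) > 0$, the existence of $\mu$ is equivalent to $AB > C^2$, which expands to $ab > C(a+b)$ --- and this is exactly the ``Key Inequality'' the paper proves (its displayed inequality is $ab - C(a+b) > 0$ with the signs absorbed into the logarithms). From there the paths split. The paper fixes $\delta_1$, treats the left-hand side as a function $f(x)$ of $x = \delta_0$, and shows $f'(x) > 0$ on $(\delta_1, 1)$ by decomposing $f'$ into two pieces $w(x)$ and $v(x)$, each claimed positive (each in fact needing its own sub-argument); since $f(\delta_1) = 0$, monotonicity finishes it. You instead pass to the harmonic form $1/a + 1/b < 1/C$, notice via $r = 1-\delta_0$, $t = \delta_1$, $\eta = \delta_0 - \delta_1$ (so $r + t + \eta = 1$) that $a = \Phi(r)$, $b = \Phi(t)$, $C = \Phi(1)$ for $\Phi(z) = \log(1+\eta/z)$, and prove $\Psi = 1/\Phi$ strictly concave. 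I verified the key computation: with $g = \Phi$, concavity of $1/g$ is $2(g')^2 \le g g''$, which under $z = \eta u$ becomes $(2u+1)\log\frac{u+1}{u} \ge 2$, the strict arithmetic-mean/logarithmic-mean inequality for the pair $(u, u+1)$; Jensen then gives $\Psi(r) + \Psi(t) \le 2\Psi\bigl(\frac{1-\eta}{2}\bigr) = 2/\log\frac{1+\eta}{1-\eta}$, and the endpoint comparison with $\Psi(1) = 1/\log(1+\eta)$ reduces, as you say, to $1 - \eta^2 < 1$. Strictness is also handled correctly: Jensen may be an equality when $r = t$, but the endpoint inequality is strict since $\eta > 0$. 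What your route buys is structure: it exposes the hidden symmetry of the inequality, replaces the paper's two ad hoc positivity lemmas with a single classical inequality, and in passing proves the sharper fact that for fixed $\eta$ the quantity $1/a + 1/b$ is maximized at $r = t$ and still falls short of $1/C$. The paper's route is more elementary (one-variable calculus only) and tracks the natural ``perturb $\delta_0$'' geometry of the dominance argument, but it is precisely the tedious computation the author alludes to when omitting the proof from the main text.
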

Although the proof of lemma is quite tedious, it is straightforward. We omit the proof.

\begin{proof}[proof of Theorem~\ref{minicomp}]

First, we show the admissibility of each class.
By definition, it is trivial to see that every $\delta \in \cal{C}_{=}$ is admissible.
The maximum likelihood estimate $\delta_{MLE}$ is also admissible under the Kullback-Leibler loss.
As we shall see later, $\delta \in \cal{C}_{<}$ is written as the unique Bayes decision with respect to a prior distribution.
Thus, $\cal{C}_{<}$ is admissible.

Next we show the following statements, which imply the completeness of the class $\cal{A}$.
(i) For every $\delta \in \cal{P}$, the maximum likelihood estimate, $\delta_{MLE} $, dominates $\delta $.
(ii) For every $\delta \in \cal{C}_{>}$, there exists $\delta' \in \cal{C}_{<}$ dominating $\delta $.

Trivially (i) holds.
We show (ii), which requires a bit more calculation.
We consider the risk difference,
\begin{align*}
\Delta R(\theta) & = R_{\delta }(\theta ) - R_{\delta'}( \theta)  
 =  F \theta^{2} - G \theta + H,
\end{align*}
where
\begin{align*}
F = \log \leftb  \frac{ \delta'_{1}}{ \delta_{1} } \frac{ 1-\delta_{1}}{ 1-\delta'_{1} }  \frac{ \delta_{0}}{ \delta'_{0} } 
  \frac{ 1-\delta'_{0}}{ 1-\delta_{0} }  \rightb, \ G= \log \leftb \frac{ 1-\delta'_{0} }{ 1-\delta_{0} } \frac{ 1-\delta_{1}}{ 1-\delta'_{1} }  
        \frac{ 1-\delta'_{0}}{ 1-\delta_{0} }  \frac{ \delta_{0}}{ \delta'_{0} }  \rightb, \ H =  \log \frac{1-\delta'_{0}}{1-\delta_{0}}. 
\end{align*}

Assuming $0 < \delta_{1} < \delta_{0} < 0$, we show that there exists $\delta' \in \cal{C}_{<}$
  such that $\Delta R (\theta) > 0$ for any $\theta$.
When $0 < \delta'_{0} < \delta'_{1} < 0$, it is straightforward to show that $F>0$.
(The inequality $0 < G/(2F) < 1$ also holds, but we need no more it.) 
Thus, we obtain $ \Delta R(\theta) = F\{ \theta - G/(2F) \}^{2} + (FH - G^{2}/4)/F$.
It is enough to show $ FH - G^{2}/4 > 0$.

Now we use Lemma~\ref{keyineq}.
Let us choose a positive parameter $\mu $ satisfying the inequality \eqref{ineq:geods}
 and define $\delta'_{0}= e^{-\mu } \delta_{0}$ and $\delta_{1}' = e^{-\mu } \delta_{1} + (1- e^{-\mu })$. 
Then clearly $\delta' \in \cal{C}_{<}$ and 
\begin{align*}
FH-\frac{G^{2}}{4} 
 &=\left\{ \log  \frac{ 1- (1-\delta_{1}) e^{-\mu} }{ \delta_{1} } \right\}
  \left\{ \log \frac{ 1- \delta_{0} e^{-\mu}  }{ 1-\delta_{0}} \right\} -  \mu^{2}  \\
  & > \left\{ \log  \frac{ \delta_{0}}{  ( \delta_{0} + 1 -\delta_{1}) \delta_{1}} \right\}
  \left\{ \log  \frac{  1-\delta_{1}}{ (  \delta_{0} + 1 -\delta_{1}) (1-\delta_{0}) }  \right\} - \mu^{2} \\
  & > 0.
\end{align*}
Thus, (ii) is proved. 

From the above statements we proved, the minimal complete class is included by $\cal{D} \setminus (\cal{C}_{>} \cup \cal{P}) = \cal{A}$. 
On the other hand, the minimal complete class necessarily includes all admissible decisions $\cal{A}$.
Thus, $\cal{A}$ is the minimal complete class.
\end{proof}

\subsection{Minimax predictive distribution}

Next we find the minimax decision.
From the above argument, it is enough to find the minimax decision among $\cal{C}_{<} \cup  \cal{C}_{=}$.
(For $\delta \in \cal{C}_{>}$, the worst case risk of $\delta $ is strictly larger than that of $\delta' \in \cal{C}_{<}$. 
For $\delta \in \partial \cal{D}$, the worst case risk is $+\infty$.) \\

From straightforward calculation, we obtain
\begin{align}
 \inf_{\delta \in \cal{C}_{=}} \sup_{0 \leq \theta \leq 1} R_{\delta }(\theta )
  \geq \log 2. \label{eq:worstrisk}
\end{align}
As we shall see later, $\delta \in \cal{C}_{<}$ has smaller value of the worst case risk.
Thus, we consider the class $\delta \in \cal{C}_{<}$ to find out the minimax decision.

Recall that the Bayes decision with respect to the beta prior, $ B(a, b)^{-1} \theta^{a-1} (1-\theta )^{b-1}$, $(a>0, b>0)$, 
 is $\delta_{0} = a/(a + b+ 1)$ and $\delta_{1} = (a+1)/(a+b+1)$.
See, e.g., \citet{Aitchison} for the Bayes decision under the Kullback-Leibler loss.
From the above relation, there exists one-to-one correspondence between the hyperparameter $a$ and $b$ and the decision $\delta \in \cal{C}_{<}$.
From now on, $(a, b)$ is identified with $\delta \in \cal{C}_{<}$.

Next, we show that it is enough to consider the minimax decision when $a=b$.
For an arbitrary $\delta \in \cal{C}_{<} $ with distinct $a$ and $b$, 
let $ \tilde{\delta} $ denote as the decision where $a$ and $b$ are swapped in $\delta$.
Then the symmetrized decision, that is, $ \delta_{s} = \delta/2 + \tilde{\delta}/2$
satisfies the following inequality due to the convexity of the Kullback-Leibler divergence,
\begin{align*}
\half R_{\delta} (\theta ) + \half R_{\tilde{\delta}} (\theta ) & > R_{\delta_{s} } ( \theta ). 
\end{align*}
Since $R_{\tilde{\delta}} (\theta) = R_{\delta} (1-\theta)$, we obtain
\begin{align*}
 \sup_{0 \leq \theta \leq 1} R_{\delta}(\theta )
 > \sup_{0 \leq \theta \leq 1} R_{\delta_{s} } ( \theta ). 
\end{align*}
Therefore we may assume $a=b$ without loss of generality.
Then, the risk function is written as
\begin{align*}
R_{\delta} (\theta) &= -S(\theta ) -2 \leftb \log \frac{a+1}{ a} \rightb \theta^{2}
  -2 \leftb  \frac{a+1}{a} \rightb \theta +\log \leftb \frac{2a +1}{ a+1}\rightb.
\end{align*}
From elementary calculation we obtain
\begin{align*}
 \sup_{0 \leq \theta \leq 1} R_{\delta} (\theta)
 = \begin{cases} 
   \log \leftb \frac{2a +1 }{ a+1} \rightb, \ a \geq 1/3, \\
   \frac{1}{2} \log \leftb \frac{a+1}{4a} \rightb + \log \leftb \frac{2a +1 }{ a+1} \rightb, \ 0 < a \leq 1/3. 
 \end{cases} 
\end{align*}
When $a \geq 1/3$, the above quantity is necessarily smaller than $\log 2$.
From the inequality \eqref{eq:worstrisk}, we see that $\delta \in \cal{C}_{=}$ must not be minimax.
It is easily seen that $a=1/3$ corresponds to the unique minimax decision. 
Clearly it has smaller worst case risk than that of the Jeffreys prior ($a=1/2$) and that of the uniform prior ($a=1$).

Let us summarize the above result as Theorem.
\begin{theorem}
\label{mainresult}
In the above setting, the minimax decision is $ \delta_{0} = 1/5, \delta_{1}=4/5 $ and unique.
\end{theorem}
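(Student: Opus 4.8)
The plan is to treat Theorem~\ref{mainresult} as the endpoint of the minimization already set up above. By Theorem~\ref{minicomp} the complete class is $\cal{A}=\cal{C}_{<}\cup\cal{C}_{=}\cup\cal{M}$; the maximum likelihood point lies in $\partial\cal{D}$ and hence has worst case risk $+\infty$, the class $\cal{C}_{=}$ is ruled out by the bound \eqref{eq:worstrisk}, and the strict symmetrization inequality established above lets me restrict attention to the symmetric beta family $a=b>0$, since symmetrizing strictly lowers the worst case risk of any asymmetric decision. Thus everything reduces to minimizing the single function $W(a)=\sup_{0\le\theta\le1}R_{\delta}(\theta)$ over $a>0$, using the explicit piecewise formula recorded above.

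First I would carry out this one-variable minimization branch by branch. On $a\ge 1/3$ we have $W(a)=\log\frac{2a+1}{a+1}$; since $\frac{d}{da}\frac{2a+1}{a+1}=\frac{1}{(a+1)^{2}}>0$, the argument of the logarithm is increasing, so this branch is strictly increasing. On $0<a\le 1/3$ we have $W(a)=\frac12\log\frac{a+1}{4a}+\log\frac{2a+1}{a+1}$, and differentiating gives, after combining over the common denominator, $W'(a)=-\frac{1}{2a(a+1)(2a+1)}<0$, so this branch is strictly decreasing. The two expressions agree at $a=1/3$ with common value $\log(5/4)$, so $W$ is continuous, strictly decreasing on $(0,1/3]$ and strictly increasing on $[1/3,\infty)$; hence it attains a unique global minimum at $a=1/3$.

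It then remains to translate $a=b=1/3$ back into a decision and to assemble uniqueness. Substituting into $\delta_{0}=a/(a+b+1)$ and $\delta_{1}=(a+1)/(a+b+1)$ gives $\delta_{0}=1/5$ and $\delta_{1}=4/5$, with minimax value $\log(5/4)<\log2$. For uniqueness I would combine three facts: the minimum of $W$ over the symmetric family is strict; any asymmetric $\delta\in\cal{C}_{<}$ is strictly dominated in worst case risk by its symmetrization $\delta_{s}$, so no asymmetric decision can be minimax; and $\cal{C}_{=}$, $\cal{C}_{>}$, $\partial\cal{D}$ are already excluded. Together these force the unique minimax decision to be the symmetric one at $a=1/3$, namely $(\delta_{0},\delta_{1})=(1/5,4/5)$.

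The genuinely delicate step lies upstream of this theorem, in producing the piecewise formula for $W(a)$: because of the $-S(\theta)$ term the risk is not a quadratic in $\theta$, so locating its maximum requires comparing the interior value $R_{\delta}(1/2)$ with the endpoint value $\log\frac{2a+1}{a+1}$ and controlling the shape through $R''_{\delta}(\theta)=\frac{1}{\theta(1-\theta)}-4\log\frac{a+1}{a}$, the crossover between the two regimes occurring exactly at $a=1/3$. Granting that formula, the minimization itself is elementary, and the only care needed is to verify that both one-sided derivatives carry the stated signs and to retain the strict inequalities, so that uniqueness rather than mere existence follows.
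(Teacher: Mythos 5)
Your proposal is correct and follows essentially the same route as the paper: restriction to $\cal{C}_{<}\cup\cal{C}_{=}$ via the complete class, exclusion of $\cal{C}_{=}$ by the bound \eqref{eq:worstrisk}, reduction to the symmetric beta family $a=b$ by the symmetrization inequality, and then minimization of the piecewise formula for the worst case risk. The only difference is that you make explicit the branchwise derivative computations (e.g.\ $W'(a)=-\tfrac{1}{2a(a+1)(2a+1)}$ on $0<a\le 1/3$, and the convexity analysis via $R''_{\delta}(\theta)$ behind the piecewise formula) that the paper compresses into ``it is easily seen''; those computations check out, including the matching value $\log(5/4)$ at $a=1/3$ and the translation to $(\delta_{0},\delta_{1})=(1/5,4/5)$.
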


The following concept is  useful.
\begin{definition}
A prior $\pi $ is called a minimax prior if the Bayes decision with respect to $\pi$ is minimax.
\end{definition}

We see that the Beta prior with $a=1/3$, that is, $B(1/3, 1/3)^{-1}\theta^{-2/3} (1-\theta )^{-2/3}$
 is a minimax prior. 
However, we have many minimax priors in this setting.
Indeed, we easily find a necessary and sufficient condition where a prior is a minimax prior.
\begin{corollary}
\label{cond:moment}
In the above setting, a prior distribution $\pi$ is a minimax prior 
 if and only if it satisfies the moment condition, $E(\theta) =1/2$ and $E(\theta^{2}) = 2/5$.
\end{corollary}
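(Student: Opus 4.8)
The plan is to characterize minimax priors by computing the Bayes decision attached to an arbitrary prior $\pi$ and matching it against the unique minimax decision $(\delta_{0},\delta_{1})=(1/5,4/5)$ identified in Theorem~\ref{mainresult}. By the definition of a minimax prior, $\pi$ is minimax exactly when its Bayes decision equals this minimax decision; since Theorem~\ref{mainresult} guarantees that the minimax decision is unique, the whole content of the corollary is this matching, translated into conditions on $\pi$.

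First I would record that, under the Kullback--Leibler loss, the Bayes decision is the posterior mean of $\theta$. This follows from the first-order condition: minimizing $\int [-\theta\log\delta-(1-\theta)\log(1-\delta)]\,\pi(\theta\mid x)\,d\theta$ over $\delta$ gives $\delta(x)=E[\theta\mid x]$. The posterior satisfies $\pi(\theta\mid x)\propto\theta^{x}(1-\theta)^{1-x}\pi(\theta)$, so, writing $m_{1}=E(\theta)$ and $m_{2}=E(\theta^{2})$ for the prior moments, I obtain
\begin{align*}
\delta_{1}=E[\theta\mid 1]=\frac{m_{2}}{m_{1}},\qquad
\delta_{0}=E[\theta\mid 0]=\frac{m_{1}-m_{2}}{1-m_{1}}.
\end{align*}
These two formulas are consistent with the Beta-prior values quoted before Theorem~\ref{mainresult}, which serves as a sanity check.

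Next I would impose $\delta_{1}=4/5$ and $\delta_{0}=1/5$. The first equation gives $m_{2}=(4/5)m_{1}$, while the second rearranges to $6m_{1}-5m_{2}=1$; substituting yields $2m_{1}=1$, hence $m_{1}=1/2$ and $m_{2}=2/5$. This establishes necessity. For sufficiency I would simply reverse the substitution: if $E(\theta)=1/2$ and $E(\theta^{2})=2/5$, the displayed formulas return $(\delta_{0},\delta_{1})=(1/5,4/5)$, which is minimax by Theorem~\ref{mainresult}.

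There is no serious obstacle here; the only point requiring a little care is that the equivalence must run in both directions, and this is guaranteed because the map $(m_{1},m_{2})\mapsto(\delta_{0},\delta_{1})$ above is explicitly invertible on the admissible range of moments, together with the uniqueness supplied by Theorem~\ref{mainresult}. It is worth noting that the conditions constrain only the first two moments, so that any prior on $[0,1]$ meeting them—for instance the Beta$(1/3,1/3)$ prior, which indeed has $E(\theta)=1/2$ and $E(\theta^{2})=2/5$—is a minimax prior, which accounts for the multiplicity of minimax priors remarked before the corollary.
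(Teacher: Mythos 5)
Your proof is correct and is precisely the argument the paper leaves implicit behind its remark that the condition is ``easily'' found: the Bayes decision under Kullback--Leibler loss is the posterior mean, so it depends on $\pi$ only through $m_{1}=E(\theta)$ and $m_{2}=E(\theta^{2})$ via $\delta_{1}=m_{2}/m_{1}$ and $\delta_{0}=(m_{1}-m_{2})/(1-m_{1})$, and matching with the unique minimax decision $(1/5,4/5)$ of Theorem~\ref{mainresult} gives exactly $m_{1}=1/2$, $m_{2}=2/5$ in both directions. Your computations check out (including the Beta$(a,b)$ sanity check, which reproduces $\delta_{0}=a/(a+b+1)$, $\delta_{1}=(a+1)/(a+b+1)$), and the moment condition itself guarantees $0<m_{1}<1$, so the posterior means are well defined for every prior satisfying it.
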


\subsection{Latent information prior}

Finally we obtain the latent information prior, which by definition maximizes the conditional mutual information.
As shown in \citet{Komaki2011}, Bayesian predictive distribution based on the latent information prior is minimax
 and thus the latent information prior is among the class of minimax priors.
We only consider the maximization of the conditional mutual information in a class of all minimax priors.

\begin{theorem}
\label{latent}
In the above setting, the latent information prior is $\pi_{LI}(\{ 0 \}) = \pi_{LI} ( \{ 1 \}) = 3/10,\ \pi_{LI}(\{ 1/2 \}) = 2/5$.
The maximum of the conditional mutual information is $\log 5$.
\end{theorem}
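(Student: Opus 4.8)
The plan is to lean on the two facts already established. By \citet{Komaki2011} the latent information prior is itself minimax, hence a minimax prior, so by Corollary~\ref{cond:moment} it belongs to the moment-constrained family
\begin{align*}
\cal{F} = \{ \pi :\ E_{\pi}(\theta) = 1/2,\ E_{\pi}(\theta^{2}) = 2/5 \}.
\end{align*}
It therefore suffices to maximise the conditional mutual information over $\cal{F}$ alone, and the whole difficulty is packed into this constrained maximisation.

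First I would make the objective transparent. The conditional mutual information of a prior equals the Bayes risk, under the Kullback--Leibler loss, of its Bayes predictive distribution. For $\pi \in \cal{F}$ the Bayes predictive rule is the posterior mean, $\delta_{0} = (E\theta - E\theta^{2})/(1-E\theta)$ and $\delta_{1} = E\theta^{2}/E\theta$, which depends on $\pi$ only through its first two moments; hence every $\pi \in \cal{F}$ shares the single rule $(\delta_{0},\delta_{1}) = (1/5,4/5)$ of Theorem~\ref{mainresult}. Writing $R^{*}$ for the risk of this common rule, one has $R^{*}(\theta) = -S(\theta) + (\theta^{2}+(1-\theta)^{2})\log(5/4) + 2\theta(1-\theta)\log 5$, and integrating against $\pi \in \cal{F}$ turns both quadratic terms into constants, leaving
\begin{align*}
I(\pi) = -E_{\pi}\!\left[ S(\theta) \right] + \mathrm{const}.
\end{align*}
So maximising the conditional mutual information over $\cal{F}$ is equivalent to \emph{minimising} the linear functional $E_{\pi}[S(\theta)]$ under the two moment constraints.

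This is a linear moment problem, and I would settle it by a supporting-hyperplane argument. Heuristically, to minimise $E_{\pi}[S]$ one places mass where $S$ vanishes, i.e.\ at $\theta = 0,1$, while the constraint $E\theta^{2} = 2/5 < 1/2 = E\theta$ forbids putting all mass there and forces some interior mass; the reflection $\theta \mapsto 1-\theta$, which maps $\cal{F}$ into itself and preserves $E_{\pi}[S]$ because $S(\theta)=S(1-\theta)$ (the same symmetrisation used in the preceding subsection), then locates that interior mass at $1/2$. Matching the two moments for a measure on $\{0,1/2,1\}$ fixes the weights uniquely as $\pi_{LI}(\{0\})=\pi_{LI}(\{1\})=3/10$, $\pi_{LI}(\{1/2\})=2/5$. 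To turn the heuristic into a proof I would exhibit the supporting quadratic $Q(\theta)=4(\log 2)\,\theta(1-\theta)$ and verify that $S(\theta)\ge Q(\theta)$ on $[0,1]$ with equality exactly at $\theta=0,1/2,1$; then for every $\pi \in \cal{F}$,
\begin{align*}
E_{\pi}[S(\theta)] \ge E_{\pi}[Q(\theta)] = 4(\log 2)\,(E\theta - E\theta^{2}),
\end{align*}
a quantity pinned down by the moments and attained by $\pi_{LI}$, which identifies $\pi_{LI}$ as the unique minimiser.

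Finally, substituting $\pi_{LI}$ (equivalently $E_{\pi_{LI}}[S(\theta)] = (2/5)\log 2$) into the affine functional produces the asserted maximal value of the conditional mutual information; by the minimax identity for the latent information prior this value must coincide with the minimax risk found in the preceding subsection, which furnishes an independent check. The main obstacle is precisely the global inequality $S(\theta)\ge 4(\log 2)\,\theta(1-\theta)$: it is elementary but delicate, since $g(\theta)=S(\theta)-Q(\theta)$ is concave near the endpoints and convex in the middle, so one must track where $g''$ changes sign to confirm that its three zeros at $0,1/2,1$ are its only ones and that $g\ge 0$ throughout.
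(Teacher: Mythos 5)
Your identification of $\pi_{LI}$ is correct, but your route is genuinely different from the paper's. The paper proves Lemma~\ref{finitesupport} by a perturbation argument (given five disjoint intervals of positive mass, it constructs a signed combination of the restricted measures annihilating $1$, $\theta$, $\theta^{2}$ while increasing $-\int S\,d\pi$, contradicting optimality), then invokes symmetry about $\theta=1/2$ and explicitly minimizes $2S(x)\alpha + 2S(y)\beta$ over symmetric four-point priors subject to the constraints of Corollary~\ref{cond:moment}. You instead solve the moment problem by duality: the supporting quadratic $Q(\theta)=4(\log 2)\,\theta(1-\theta)$ lies in the span of $\{1,\theta,\theta^{2}\}$, so $E_{\pi}[Q]=(2/5)\log 2$ is constant on the moment class, and $S\geq Q$ with equality exactly at $\{0,1/2,1\}$ settles everything at once. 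This buys you two things the paper's argument does not deliver as directly: you bypass the finite-support lemma entirely (including the paper's own hedge about degenerate linear systems when the prior has continuous components), and you get uniqueness for free, since equality in $E_{\pi}[S]\geq E_{\pi}[Q]$ forces the support into $\{0,1/2,1\}$, after which the two moments and normalization pin the weights to $(3/10,\,2/5,\,3/10)$. The inequality you flag as the main obstacle does close cleanly: $g=S-Q$ has $g''(\theta)=8\log 2 - 1/\{\theta(1-\theta)\}$, so $g$ is convex precisely on the middle interval where $\theta(1-\theta)\geq 1/(8\log 2)$; there symmetry gives $g'(1/2)=0$, hence the minimum is $g(1/2)=0$; on each concave end interval, $g$ lies above the chord joining $g(0)=0$ (resp.\ $g(1)=0$) to a strictly positive value, so $g\geq 0$ throughout with the stated equality set.

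Your final ``independent check,'' however, does not go through as written, and in fact exposes a genuine discrepancy you must confront. Under your (standard) identification of the conditional mutual information with the Bayes risk of the Bayes predictive rule, integrating your own risk formula over the moment class gives the constant $(4/5)\log(5/4)+(1/5)\log 5 = \log 5 - (8/5)\log 2$, so the maximum would be $-(2/5)\log 2 + \log 5 - (8/5)\log 2 = \log(5/4)$. This does coincide with the minimax risk of Theorem~\ref{mainresult} (the worst-case risk at $a=1/3$ is $\log\{(2a+1)/(a+1)\}=\log(5/4)$), exactly as your minimax identity predicts --- but it does not equal the theorem's asserted value $\log 5$. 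The paper's constant in \eqref{eq:mutual} is $(2/5)\log 2 + \log 5$, which exceeds yours by exactly $2\log 2$; note also that $I(\Theta;Y\,|\,X)\leq H(Y\,|\,X)\leq \log 2 < \log 5$ for binary $Y$, so under your definition the value $\log 5$ is unattainable. Either Komaki's functional follows a different convention from the Bayes-risk identity you use, or the constant in \eqref{eq:mutual} is in error; since constants do not affect the argmax, your identification of $\pi_{LI}$ stands regardless, but to prove the displayed value $\log 5$ you would need to compute the paper's functional as the paper defines it, and your cross-check should be reported as revealing this $2\log 2$ gap rather than confirming the stated maximum.
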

Our analytical result exactly coincides with numerically obtained one (See Fig.~1 in \citet{Komaki2011}).
As far as the author knows, it is the only analytical example of the latent information prior on a continuous parameter space.

The essential part of its proof is to show that the support of the latent information prior is equal to or less than four.
We use the following lemma.
\begin{lemma}
\label{finitesupport}
In the above setting, the latent information prior must be a discrete distribution giving mass one to at most four points.
\end{lemma}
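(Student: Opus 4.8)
The plan is to convert the maximization of the conditional mutual information into a one-dimensional variational problem and then bound the support of the maximizer by counting zeros of an analytic function. First I would exploit Corollary~\ref{cond:moment}: for any minimax prior $E(\theta)=1/2$ and $E(\theta^{2})=2/5$, and the Bayes predictive distribution depends on $\pi$ only through these two moments, since $\delta_{0}=(E(\theta)-E(\theta^{2}))/(1-E(\theta))$ and $\delta_{1}=E(\theta^{2})/E(\theta)$. Hence every minimax prior shares the same Bayes predictive distribution, namely the minimax decision $\delta^{*}=(1/5,4/5)$ of Theorem~\ref{mainresult}, and the conditional mutual information collapses to the linear functional
\begin{align*}
I(\pi)=\int_{0}^{1}R_{\delta^{*}}(\theta)\,\pi(d\theta).
\end{align*}
Thus the latent information prior maximizes $I(\pi)$ over the convex set of probability measures on $[0,1]$ subject to $E(\theta)=1/2$ and $E(\theta^{2})=2/5$.

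Next I would write down the first-order optimality condition. With Lagrange multipliers $\lambda_{0},\lambda_{1},\lambda_{2}$ for the normalization and the two moment constraints, the optimum $\pi_{LI}$ is characterized by
\begin{align*}
\phi(\theta):=R_{\delta^{*}}(\theta)-\lambda_{0}-\lambda_{1}\theta-\lambda_{2}\theta^{2}\leq 0 \quad (0\leq\theta\leq1),
\end{align*}
with equality on the support of $\pi_{LI}$. The structural key is that $R_{\delta^{*}}(\theta)=-S(\theta)+Q(\theta)$ with $Q$ a quadratic, so that
\begin{align*}
\phi(\theta)=\theta\log\theta+(1-\theta)\log(1-\theta)+\tilde{Q}(\theta),
\end{align*}
where $\tilde{Q}$ has degree at most two and $\phi$ stays finite at the endpoints because $S(0)=S(1)=0$. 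Every support point in the open interval $(0,1)$ is then a local maximum of $\phi$ at which $\phi$ vanishes, hence a double zero of $\phi$, while the only possible boundary support points are $\theta=0$ and $\theta=1$.

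Finally I would count zeros. Differentiating twice gives
\begin{align*}
\phi''(\theta)=\frac{1}{\theta(1-\theta)}+c
\end{align*}
for a constant $c$, and since $1/(\theta(1-\theta))$ is U-shaped on $(0,1)$ with minimum value $4$, the equation $\phi''(\theta)=0$ has at most two roots there. Two applications of Rolle's theorem then bound the number of zeros of $\phi$ in $(0,1)$, counted with multiplicity, by four. As each interior support point is a double zero, there are at most two interior support points; adjoining the two endpoints gives at most four support points in total, and the isolation of the zeros shows that $\pi_{LI}$ is a discrete distribution.

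I expect the main obstacle to be the rigorous derivation of the variational inequality rather than the zero-counting. One must establish the existence of the multipliers, verify that the Gateaux derivative of the conditional mutual information at $\pi_{LI}$, evaluated at the Dirac measure at $\theta$, equals $R_{\delta^{*}}(\theta)-\int R_{\delta^{*}}\,d\pi_{LI}$, and use the concavity (here linearity) of the functional to ensure this first-order condition characterizes the maximizer. A secondary technical point is the treatment of the endpoints $\theta=0,1$, where $S$ vanishes but $S'$ diverges, so that these boundary atoms must be handled outside the interior double-zero count.
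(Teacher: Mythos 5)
Your proposal is correct in outline but takes a genuinely different route from the paper. The paper's own proof is a direct perturbation argument with no optimality conditions at all: assuming $\pi_{LI}$ charges five disjoint closed intervals, it restricts $\pi_{LI}$ to them, solves a linear system in five unknowns so that the signed combination $\sum_{j} a_{j} r_{j}$ annihilates the three linear constraints ($\int 1$, $\int \theta$, $\int \theta^{2}$) while shifting $\int S\, d\pi$ by a fixed positive amount, and then perturbs $\pi_{LI}$ along this direction to strictly increase $I(\pi)$ --- a contradiction that needs no multipliers and no smoothness of $S$. You instead run the classical moment-problem (Chebyshev-system) argument: reduce to the linear functional, posit dual multipliers, characterize the support as the zero set of $\phi(\theta)=R_{\delta^{*}}(\theta)-\lambda_{0}-\lambda_{1}\theta-\lambda_{2}\theta^{2}$, and count zeros via $\phi''(\theta)=1/(\theta(1-\theta))+c$ and Rolle. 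Your reduction step is sound and is exactly the paper's display \eqref{eq:mutual} in different clothing (the posterior means $\delta_{0}=(E\theta-E\theta^{2})/(1-E\theta)$, $\delta_{1}=E\theta^{2}/E\theta$ depend only on the two fixed moments, so every minimax prior has Bayes decision $(1/5,4/5)$), and the zero-counting, with interior support points counted as double zeros and the endpoints handled separately, correctly yields at most four points. What your route buys over the paper's: structural information --- the support must lie in $\{\theta:\phi(\theta)=0\}$, and your count can even be sharpened (if both endpoints carry mass, the Rolle bookkeeping permits at most one interior double zero, so at most three support points), which essentially hands you the support $\{0,1/2,1\}$ of Theorem~\ref{latent} for free. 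What the paper's route buys: it entirely sidesteps the one genuine gap you flag, namely existence of the multipliers. That gap is real but fixable by a standard supporting-hyperplane argument: since $(1/2,2/5)$ is interior to the moment body $\{(m_{1},m_{2}): m_{1}^{2}\leq m_{2}\leq m_{1}\}$ (because $1/4<2/5<1/2$), the supporting hyperplane to the convex set $\left\{ \left( \int\theta\, d\pi, \int\theta^{2}\, d\pi, \int R_{\delta^{*}}\, d\pi \right) : \pi \text{ a prior} \right\}$ at the optimum is non-vertical, yielding $\lambda_{0},\lambda_{1},\lambda_{2}$ with $\phi\leq 0$ on $[0,1]$ and complementary slackness; you should supply this step explicitly, since without it your first-order condition remains heuristic.
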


\begin{proof}
From Corollary~\ref{cond:moment}, the conditional mutual information $I(\pi)$ has the simple form,
\begin{equation}
\label{eq:mutual} I(\pi) = - \int_{[0,1]} S(\theta) \pi (d \theta) + \frac{2}{5} \log 2 + \log 5
\end{equation}
for every minimax prior $\pi$. Constant terms are due to the moment condition.

Suppose that the latent information prior $\pi_{LI}$ has five disjoint closed intervals $I_{1},\dots, I_{5}$
 of positive probabilities. We show contradiction below.
We define positive finite measures $r_{1}, \dots, r_{5}$ by restricting the probability measure to each interval.
Then, we take five real numbers $a_{1}, \dots, a_{5}$ satisfying
\begin{align*}
 \int\! 1 \sum_{j=1}^{5} a_{j} r_{j} (d \theta ) =
 \int\! \theta \sum_{j=1}^{5} a_{j} r_{j} (d \theta ) =
 \int\! \theta^{2} \sum_{j=1}^{5} a_{j} r_{j} (d \theta ) =0, \ \int S(\theta ) \sum_{j=1}^{5} a_{j} r_{j} (d \theta ) = 1.
\end{align*}
The above linear equations might be degenerate if the probability measure has continuous components.
In this case, we replace one interval with a smaller one.

Next, we define a probability measure close to $\pi_{LI}$ as 
\begin{align*}
\tilde{\pi} = \pi_{LI} - \sum_{j=1}^{5} \frac{a_{j} r_{j}}{ |a_{1}| + \cdots + |a_{5}| }.
\end{align*}
Then $\tilde{\pi}$ is a minimax prior and $I(\tilde{\pi}) =  I(\pi_{LI}) + 1/(|a_{1}| + \cdots + |a_{5}| ) > I(\pi_{LI}) $, which yields contradiction. 
\end{proof}

\begin{remark}
At most, five points are necessary.
For example, $S(\theta ) + 3 \theta^{2} - 3 \theta - 0.04$ has four distinct zeros.
If $\pi_{LI}$ has the nonzero weight to each zero, then four equations are no longer independent. 
\end{remark}

\begin{proof}[proof of Theorem~\ref{latent}]
From Lemma~\ref{finitesupport}, the latent information prior has positive weights only on four points.
Since the maximization problem is symmetric about $\theta =1/2$, we may assume that four points are $x,y, 1-y$ and $1-x$, $(0 \leq x \leq y \leq 1/2)$.
The latent information prior is parametrized as $\pi(\{ x \}) = \pi ( \{ 1-x \}) = \alpha$ and $\pi(\{ y \}) = \pi ( \{ 1-y \}) = \beta$.
We consider minimizing the function
\begin{align*}
\int_{[0,1]} S(\theta ) \pi (d \theta ) = 2 S(x) \alpha + 2 S(y) \beta  
\end{align*}
under the moment condition in Corollary~\ref{cond:moment}, the normalization $2\alpha + 2 \beta =1$, and the positivity $\alpha \geq0, \beta \geq 0$.
We easily obtain the minimum at $x=0, y=1/2, \alpha = 3/10$ and $\beta = 2/10$.
\end{proof}

The maximization of the conditional mutual information assures the uniqueness of the resulting objective prior.
However, as our analytical example indicates, it may yield a discrete prior.
In our example, the resulting discrete prior is not a result of discretized numerical maximization.
At least, the latent information prior in this specific example is not desirable as a general-purpose objective prior, e.g., it is not used to construct a credible region.

\section*{Acknowledgement}
This work was supported by the Grant-in-Aid for Young Scientists (B) (No. 24700273)
 and the Grant-in-Aid for Scientific Research (B) (No. 26280005).


\end{document}